\documentclass[a4paper,11pt]{amsart} 
\usepackage{cite}
\usepackage{etoolbox}

\makeatletter
\patchcmd{\@setaddresses}{\indent}{\noindent}{}{}
\patchcmd{\@setaddresses}{\indent}{\noindent}{}{}
\patchcmd{\@setaddresses}{\indent}{\noindent}{}{}
\patchcmd{\@setaddresses}{\indent}{\noindent}{}{}
\makeatother

\usepackage[english]{babel}
\usepackage{dsfont} 
\usepackage{graphicx}
\usepackage{color}
\usepackage{enumerate}
\usepackage{array,multirow}

\allowdisplaybreaks

\usepackage{amssymb} 
\usepackage{amsthm} 
\usepackage{dsfont}
\usepackage{amsmath,esint}

\numberwithin{equation}{section}

\theoremstyle{plain}
\newtheorem{theorem}{Theorem}[section]
\newtheorem{lemma}[theorem]{Lemma}
\newtheorem{prop}[theorem]{Proposition}

\newtheorem{remark}[theorem]{Remark}

\theoremstyle{definition}
\newtheorem{df}[theorem]{Definition}
\newtheorem{example}[theorem]{Example}

\begin{document}

\title{Moment problems on compacts of characters of an unital commutative algebra
}

\author{Dragu Atanasiu}

\address{Faculty of Textiles, Engineering and Business\newline University of Bor\aa s\newline All\'egatan 1, 503 32, Bor\aa s\newline  Sweden}

\email{dragu.atanasiu@hb.se}

\subjclass[2020]{Primary 43A35, 44A60; Secondary 28C05}

\keywords{positive semidefinite function, moment problem, Radon measures}
\maketitle 
\begin{abstract}
In this note  we consider linear functionals on an unital commutative $\mathbb R$-algebra.

We  give an integral representation of a nonnegative functional on an Archimedean  cone where we do not assume that this cone is  a semiring or a quadratic module.

We also give a solution of the moment problem on a product of intervals and determine  conditions for a functional to be a moment functional on a compact of characters.

\end{abstract}

\section{Introduction and preliminaries}

Let $A$ be an unital commutative $\mathbb R$-algebra.
Let $X(A)$ be the set of all $\mathbb R$ algebras homomorphisms from $A$ to $\mathbb R$.
We assume that $X(A)$ is non-empty, and we endow $X(A)$ with the topology of pointwise convergence.
A linear functional $L:A\to \mathbb R$ 
is positive semidefinite if $L(a^2)\ge 0$ for all $a\in A$.

Let $K$ be a compact of $X(A)$. We say that the functional $L:A\to \mathbb R$ is a moment function on $K$ if there is a positive Radon measure $\mu $ on $K$
such that
$$L(a)=\int_{K}\alpha(a)d\mu(\alpha),a\in A.$$
The measure $\mu$ is called the representing measure for $L$.

A cone is a subset $C\subseteq A$ such that 
$C+C\subseteq C$ and $\lambda\cdot C\subseteq C$  for all $\lambda\ge 0$.

A cone $C$ with  $1\in C$ is called an \textit{Archimedean } cone if for every $a\in A$ there is a $\lambda_a\in (0,\infty)$ such that such that $\lambda_a\pm a\in A$.

A quadratic module is a subset $Q\subseteq A$ such that 
$Q+Q\subseteq Q$ and $a^ 2Q\subseteq Q$  for all $a\in A$.

A semiring is a subset $S\subseteq A$ such that 
$S+S\subseteq S, S\cdot S\subseteq S$ and $\lambda\in S$ for all $\lambda\ge 0$.

The function $v:A\to [0,\infty)$ is a submultiplicative function if $v(ab)\le v(a)v(b)$ for all $a,b\in A$ and $v(1)=1$.

The functional $ L:A\to \mathbb R$ is $v-$bounded, where $v$ is a submultiplicative function, if there is a number $C>0$ such that 
$|L(a)|\le Cv(a),a\in A$.

We say that a function $ L:\mathbb N_0=\{0,1,\dots\}\to \mathbb R$ is positive semidefinite on $\mathbb N_0$ if, for every choice of $m\in\mathbb N,n_1,\dots,n_m\in \mathbb N_0$
and $c_1,\dots,c_m\in \mathbb R$ we have
$$\sum_{j,k=1}^ mc_jc_kf(n_j+n_k)\ge 0.$$

A function  $f:\mathbb N_0\to \mathbb R$ is a  moment function  on a compact set  $K\subset \mathbb R$ if there is a positive Radon measure $\mu$ on $K$ such that
$$f(n)=\int_{K}x^ nd\mu(x),n\in \mathbb N_0.$$

In section 2 of this note we construct a submultiplicative function on the algebra $A$.

The motivation of construction in section 2 is the main result in section 3 where we consider an Archimedean cone which a priori  is neither a semiring nor a quadratic module and show, using the multiplicative function from Section 2, that a functional which is nonnegative  on this cone is a moment functional. 

In  section 4 we prove a  Positivstellensatz related to the cone from section 3.

In the next section  we use the submultiplicative function from section 2 to give a solution of the moment problem on a product of intervals and to show that a functional $L:A\to \mathbb R$ is a moment  on a compact set of characters of $A$ if and only if for all $a\in A$ the function $\mathbb N_0\to \mathbb R$ defined by $n\to L(a^ n)$ is a  moment problem on a compact of $\mathbb R$.

In section 6 we consider algebras with a finite number of generators and using that every compact in a finite  dimensional space is included in a ball we establish 
a particularly simple characterization of moment functionals on compacts.

At the end of this paper we compare the submultiplicative function constructed in section 2 with the submultiplicative seminorm in \cite{MSTP}.

The main results of this note are Theorems  \ref{IWOTA13}, \ref{IWOTAINTRINSIC} and  \ref{sf}.

\section{The function $v_L$}
\begin{df}Let $A$ be an unital commutative  $\mathbb R$-algebra, $L:A\to \mathbb R$ a positive semidefinite functional,with $L(1)=1$, and $a\in A$. 
We denote 
$$v_L(a)=\sqrt{\sup_{n\in \mathbb N_0,L(a^{2n})\ne 0}\frac {L(a^{2n+2})}{L(a^{2n})}}$$
when $\sup_{n\in \mathbb N_0,L(a^{2n})\ne 0}\frac {L(a^{2n+2})}{L(a^{2n})}<\infty$
and  $v_L(a)=\infty $ when 
$$\sup_{n\in \mathbb N_0,L(a^{2n})\ne 0}\frac {L(a^{2n+2})}{L(a^{2n})}=\infty.$$
\end{df}
\begin{lemma}\label{arxiv}Let $A$ be an unital commutative  $\mathbb R$-algebra, $L:A\to \mathbb R$ a positive semidefinite functional,with $L(1)=1$, and $a\in A$. We have
$$v_L(a)<\infty$$ if and only if there is a number $c\in [0,\infty)$
such that
$$|L(a^ n)|\le c^ n,n\in \mathbb N_0.$$

If these conditions are satisfied  we have
$$v_L(a)=\inf \{c\in \mathbb [0,\infty)|L(a^ n)\le c^ n,n\in \mathbb N_0\}.$$
\end{lemma}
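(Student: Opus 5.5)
The plan is to reduce everything to the one-variable sequence $s_n=L(a^n)$, $n\in\mathbb N_0$, and exploit the Cauchy--Schwarz inequality coming from positive semidefiniteness of $L$. Since $L(b^2)\ge 0$ for all $b$, the bilinear form $(b,c)\mapsto L(bc)$ is positive semidefinite. This gives $s_{2n}=L((a^n)^2)\ge 0$ and $L(bc)^2\le L(b^2)L(c^2)$. Throughout I use the convention $0^0=1$, so that $s_0=L(1)=1$ is compatible with the bounds.

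\emph{Step 1 (structure of the even moments).} Apply Cauchy--Schwarz with $b=a^n$ and $c=a^{n+2}$:
\[
s_{2n+2}^2\le s_{2n}\,s_{2n+4}.
\]
First, if $s_{2n}=0$ then $s_{2n+2}=0$. Hence the set of $n$ with $s_{2n}\ne 0$ is an initial segment $\{0,1,\dots,N-1\}$ with $N\in\mathbb N\cup\{\infty\}$, and $N\ge 1$ since $s_0=1$. Second, on that segment the ratios $r_n=s_{2n+2}/s_{2n}$ are nondecreasing. Telescoping from $s_0=1$ then gives $s_{2n}=r_0r_1\cdots r_{n-1}$ for $n\le N$, and $s_{2n}=0$ for $n\ge N$. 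Using $b=a^n$ and $c=a^{n+1}$ also gives
\[
|s_{2n+1}|\le\sqrt{s_{2n}\,s_{2n+2}}.
\]

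\emph{Step 2 ($v_L(a)<\infty$ gives a bound).} Put $v=v_L(a)$. Then $r_k\le v^2$ for every admissible $k$, so $s_{2n}\le v^{2n}$ for all $n$ by Step 1; this holds trivially when $s_{2n}=0$. For odd exponents the last inequality of Step 1 yields $|s_{2n+1}|\le\sqrt{v^{2n}v^{2n+2}}=v^{2n+1}$. Hence $|L(a^n)|\le v^n$ for all $n$, so $c=v$ works. In particular $v$ belongs to the set $\{c\ge 0: L(a^n)\le c^n,\ n\in\mathbb N_0\}$, and therefore the infimum of that set is at most $v$.

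\emph{Step 3 (a bound gives $v_L(a)\le c$).} Suppose $L(a^n)\le c^n$ for all $n$; the absolute-value version is a special case. I only use even $n$. Fix an admissible $k$, i.e. $s_{2k}\ne 0$. If $r_k=0$ there is nothing to prove. Otherwise, monotonicity of the ratios shows that $s_{2n}>0$ for all $n>k$ and
\[
s_{2n}=s_{2k}\,r_k r_{k+1}\cdots r_{n-1}\ge s_{2k}\,r_k^{\,n-k}.
\]
Combining with $s_{2n}\le c^{2n}$ gives $s_{2k}r_k^{n-k}\le c^{2n}$. Taking $n$-th roots and letting $n\to\infty$, using $s_{2k}>0$, gives $r_k\le c^2$. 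Thus $v_L(a)\le c$. This proves that a bound implies finiteness, and also that $v_L(a)$ is at most the infimum.

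Together, Steps 2 and 3 give both the equivalence and the equality
\[
v_L(a)=\inf\{c\ge 0 : L(a^n)\le c^n,\ n\in\mathbb N_0\}.
\]
The main obstacle is Step 1. The key points there are that the zero set of the even moments propagates upward, so the supremum in the definition runs over an initial segment, and that the ratios are monotone. Without monotonicity one cannot pass from the growth bound to a bound on each individual ratio in Step 3.
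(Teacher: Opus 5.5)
Your proof is correct. Step 2 is essentially the paper's forward direction: telescope the ratio bound, then apply Cauchy--Schwarz. The only difference there is that the paper uses $|L(a^n)|\le\sqrt{L(a^{2n})L(1)}$ for all $n$ at once, while you pair $a^n$ with $a^{n+1}$ for odd exponents. Your Step 1 also supplies a point the paper passes over: $L(a^{2k})=0$ forces $L(a^{2k+2})=0$, which the telescoping needs when some even moment vanishes.

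The converse is where the routes differ.
\begin{itemize}
\item The paper fixes $n$ and notes that $m\mapsto L(a^{m+2n})$ is positive semidefinite on $\mathbb N_0$ and bounded by $c^{2n}c^m$. It then invokes \cite{BCR}, Chapter 4, Proposition 1.12 to get $|L(a^{m+2n})|\le c^mL(a^{2n})$. Taking $m=2$ gives $L(a^{2n+2})\le c^2L(a^{2n})$ for each $n$ directly.
\item You instead prove log-convexity of the even moments (nondecreasing ratios $r_n$). You then extract $r_k\le c^2$ from the growth bound by an $n$-th root limit.
\end{itemize}

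The paper's argument is shorter but outsources the key inequality, whose proof in \cite{BCR} is itself an iterated Cauchy--Schwarz argument. Yours is self-contained and uses only the one-sided bound $L(a^{2n})\le c^{2n}$ on even moments. It therefore gives the equality with the infimum over $\{c\ge 0: L(a^n)\le c^n\}$ exactly as stated. The paper's converse, by contrast, is written under the two-sided hypothesis $|L(a^n)|\le c^n$, so reaching the stated infimum there needs a small extra remark.

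As a bonus, $L(a^{2n})^{1/n}$ is the geometric mean of the nondecreasing ratios $r_0,\dots,r_{n-1}$. So your Step 1 yields almost immediately the identity $\sup_{n}L(a^{2n})^{1/(2n)}=v_L(a)$ of Theorem \ref{IWOTA33}.
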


\begin{proof}Suppose $v_L(a)<\infty$. We have for $n\in \mathbb N_0$

$$L(a^ {2n} )\le (v_L(a))^ 2L(a^ {2(n-1)} )\le\dots \le (v_L(a))^ {2n}L(1)=(v_L(a))^ {2n} $$
which yields 
$$|L(a^ n)|\le\sqrt { L(a^ {2n} )}\le( v_L(a))^ {n}.$$

Now let $c\in[0,\infty)$. 

Suppose that
$$|L(a^ n)|\le c^ n,n\in \mathbb N_0.$$

Let $n\in \mathbb N_0$. Because the function $m\mapsto L(a^ ma^ {2n})$ is positive semidefinite  on $\mathbb N_0$  we get from \cite{BCR},Chapter 4, Proposition 1.12   
$$|L(a^ ma^ {2n})|\le c^ mL(a^ {2n}), m\in \mathbb N_0.$$

Hence we have
$$\sup_{n\in \mathbb N_0,L(a^{2n})\ne 0}\frac {L(a^{2n+2})}{L(a^{2n})}\le c^ 2$$
which  finishes the proof.

\end{proof}
\begin{prop}\label{av}Let $A$ be an unital commutative  $\mathbb R$-algebra, $L:A\to \mathbb R$ a positive semidefinite functional,with $L(1)=1$ 

We suppose that $v_L(a)<\infty$ for all $a\in A$.

The function $v_L:A\to [0,\infty)$ is a submultiplicative function and the function $L$ is $v_L-$bounded.

\end{prop}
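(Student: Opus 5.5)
Our plan is to rely on Lemma \ref{arxiv}, which identifies $v_L(a)$ with the infimum of those $c\ge 0$ satisfying $|L(a^n)|\le c^n$ for all $n\in\mathbb N_0$. (The proof of the lemma actually yields $|L(a^n)|\le v_L(a)^n$, and conversely shows that any such $c$ dominates $v_L(a)$.) Equivalently, we use the spectral-radius-type formula
$$v_L(a)=\lim_{n\to\infty}L(a^{2n})^{1/(2n)}=\sup_{n}L(a^{2n})^{1/(2n)}.$$
This follows from the lemma: $L(a^{2n})^{1/(2n)}\le v_L(a)$, and the ratios $L(a^{2n+2})/L(a^{2n})$ are nondecreasing by Cauchy--Schwarz for the positive semidefinite function $m\mapsto L(a^m)$, namely $L(a^{2n+2})^2\le L(a^{2n})L(a^{2n+4})$. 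Hence $\sup$ of the ratios equals their limit, which equals $\lim L(a^{2n})^{1/(2n)}$.

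First, $v_L(1)=1$, since $L(1^{2n})=1$ for all $n$, so every ratio equals $1$. Second, $v_L$-boundedness follows with $C=1$ and $n=1$ in the lemma: $|L(a)|\le v_L(a)$.

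The main step is submultiplicativity $v_L(ab)\le v_L(a)v_L(b)$. Commutativity gives $(ab)^{2n}=a^{2n}b^{2n}$. We apply Cauchy--Schwarz for the positive semidefinite form $(x,y)\mapsto L(xy)$:
$$L(a^{2n}b^{2n})\le L(a^{4n})^{1/2}L(b^{4n})^{1/2}\le v_L(a)^{2n}v_L(b)^{2n},$$
using $L(a^{4n})\le v_L(a)^{4n}$ from the lemma. Thus $|L((ab)^{m})|\le (v_L(a)v_L(b))^m$ holds for even $m$. For odd $m$ we get $|L((ab)^m)|\le L((ab)^{2m})^{1/2}\le (v_L(a)v_L(b))^m$. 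By Lemma \ref{arxiv}, $c=v_L(a)v_L(b)$ is admissible, so $v_L(ab)\le c$.

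The expected obstacle is justifying that the infimum characterization in Lemma \ref{arxiv} applies with the bound $|L(a^n)|\le c^n$ (with absolute value) rather than only $L(a^n)\le c^n$. The lemma's second part proves exactly that $|L(a^n)|\le c^n$ for all $n$ implies $v_L(a)\le c$, and that is all we need. If that formulation were troublesome, we would instead pass through the monotone-ratio limit formula above, for which the even-power estimate alone suffices. Finiteness of $v_L(ab)$ is part of the hypothesis, and nonnegativity of $v_L$ is clear.
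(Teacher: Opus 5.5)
Your proposal is correct and follows essentially the same route as the paper: you get $v_L(1)=1$ and $|L(a)|\le v_L(a)$ from Lemma~\ref{arxiv}, then apply Cauchy--Schwarz for the form $(x,y)\mapsto L(xy)$ together with the characterization in Lemma~\ref{arxiv} to obtain $v_L(ab)\le v_L(a)v_L(b)$. The only difference is cosmetic: the paper applies Cauchy--Schwarz directly as $|L((ab)^n)|^2\le L(a^{2n})L(b^{2n})$ for every $n$, so it needs neither your even/odd split nor the spectral-radius remark.
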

\begin{proof}
Because it is easy to see that  $v_L(1)=1$ and we have shown in the proof of Lemma \ref{arxiv} that $|L(a)|\le v_L(a)$ we only have to show that  for all $a,b\in A$
$$v_L(ab)\le v_L(a)v_L(b).$$
We have, for all $n\in \mathbb N_0$,
$$|L(a^ n)|\le (v_L(a))^ n;$$
$$|L(b^ n)|\le (v_L(b))^ n$$
and
$$|L((ab)^ n)|\le v_L(ab))^ n.$$
We also have, for all $n\in \mathbb N_0$,
$$|L((ab)^ n)|^ 2\le L(a^ {2n})L(b^ {2n})\le (v_L(a))^ {2n}(v_L(b))^ {2n}.$$
That is
$$|L((ab)^ n)|\le (v_L(a)v_L(a))^ n.$$
Consequently Lemma \ref{arxiv} yields 
$$v_L(ab)\le v_L(a)v_L(b).$$
\end{proof}
\section{A functional nonnegative on an Archimedean cone}
The next theorem  is the reason of the construction of the function $v_L$ .
To prove it we need the following proposition.
\begin{prop}\label{PreMR}Let $A$ be an unital commutative $\mathbb R$-algebra, a set  $Q\subseteq A$ and $v:A\to [0,\infty)$ a multiplicative function.

For a functional $ L:A\to \mathbb R$, the following conditions are equivalent:
\begin{enumerate}
\item[(i)]
$L$ is  positive semidefinite , $v$-bounded and for all $q\in Q$ the function $x\mapsto L(qx)$ is positive semidefinite;
\item[(ii)]there is an unique positive Radon measure $\mu$ on 
$$K=\{\alpha \in X(A):|\alpha(a)|\le v(a),a\in A\,\, \text{and}\,\,\alpha(q)\ge 0,q\in Q\}$$
such that
$$L(a)=\int_{K}\alpha (a)d\mu(\alpha),a\in A.$$

\end{enumerate}
\end{prop}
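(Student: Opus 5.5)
The plan is to derive this from the classical representation theorem for $v$-bounded positive semidefinite functionals on a commutative semigroup: Berg--Christensen--Ressel, Chapter 4, Theorem 2.5 and its corollaries. There the semigroup is the multiplicative semigroup $(A,\cdot)$ with the involution taken to be the identity. I read ``multiplicative function'' in the statement as the submultiplicative function of the preliminaries; submultiplicativity is all that is used. I would not try to redo the Gelfand-type construction by hand.

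\textbf{(ii)$\Rightarrow$(i).} This direction is routine. For $a\in A$ we have $L(a^2)=\int_K\alpha(a)^2\,d\mu\ge 0$. Next,
$$|L(a)|\le\int_K|\alpha(a)|\,d\mu\le \mu(K)\,v(a),$$
so $L$ is $v$-bounded. Finally, for $q\in Q$,
$$\sum_{j,k} c_jc_k L(qx_jx_k)=\int_K \alpha(q)\Bigl(\sum_j c_j\alpha(x_j)\Bigr)^2 d\mu\ge 0,$$
because $\alpha(q)\ge 0$ on $K$.

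\textbf{(i)$\Rightarrow$(ii).} Regard $L$ as a function on the semigroup $(A,\cdot)$. It is positive semidefinite in the semigroup sense: for $x_1,\dots,x_m\in A$ we have $\sum c_jc_kL(x_jx_k)=L\bigl((\sum c_jx_j)^2\bigr)\ge 0$ by linearity. It is also $v$-bounded with $v$ an absolute value. By BCR, $L(a)=\int \rho(a)\,d\mu(\rho)$ for a unique Radon measure $\mu$ on the compact set $S_v$ of semicharacters with $|\rho|\le v$.

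\textbf{Step 1.} Show that $\mu$ is concentrated on the $\mathbb R$-algebra homomorphisms. Semicharacters are multiplicative with $\rho(1)=1$, so only additivity and homogeneity must be checked. Since $L$ is linear, the function
$$\rho\mapsto \rho(a+b)-\rho(a)-\rho(b)$$
integrates to zero against $\rho(x)\,d\mu$ for every $x\in A$, because $L((a+b)x)=L(ax)+L(bx)$. The products $\rho\mapsto\rho(x)$ separate points of $S_v$ and form an algebra under multiplication; together with the constants (take $x=1$) their linear span is a subalgebra of $C(S_v)$. By Stone--Weierstrass this span is dense, so the continuous function above vanishes $\mu$-a.e. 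It is continuous, so it vanishes on $\operatorname{supp}\mu$. The same argument handles $\rho(\lambda a)=\lambda\rho(a)$. Hence $\operatorname{supp}\mu\subseteq\{\alpha\in X(A):|\alpha|\le v\}$.

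\textbf{Step 2.} Show that the support lies where $\alpha(q)\ge 0$. For $q\in Q$, the functional $L_q(x)=L(qx)$ is positive semidefinite and equals $\int\alpha(x)\,\alpha(q)\,d\mu$. For any polynomial-type function $f=\sum c_j\alpha(x_j)$ we get
$$\int \alpha(q)f^2\,d\mu=\sum_{j,k} c_jc_kL(qx_jx_k)\ge 0.$$
By density these $f$ approximate uniformly any continuous function. Thus $\int \alpha(q)g^2\,d\mu\ge0$ for all continuous $g$, which forces $\alpha(q)\ge0$ on $\operatorname{supp}\mu$. So $\mu$ lives on $K$. The set $K$ is closed in the compact $S_v$ and hence compact.

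\textbf{Uniqueness.} Two measures on $K$ representing $L$ agree on the dense algebra spanned by the evaluations $\alpha\mapsto\alpha(a)$. This algebra contains the constants and separates points, so Stone--Weierstrass gives equality of the measures.

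The main obstacle is Step 1. One must make sure the BCR setting (semicharacters of $(A,\cdot)$ bounded by $v$, a compact space, and the density argument) truly yields linearity on the support. One must also check that the topology on $S_v$ matches the pointwise topology on $X(A)$, so that the measure transfers to a Radon measure on $K$.
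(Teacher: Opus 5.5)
Your proof is correct, but it takes a different route from the paper.

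\textbf{The paper's route.} The paper never looks at semicharacters of $(A,\cdot)$. It applies \cite{BCR}, Chapter 4, Proposition 1.12 to the positive semidefinite, $v$-bounded function $a\mapsto L(ab^2)$ and obtains $|L(ab^2)|\le v(a)L(b^2)$. Hence $L\bigl((v(a)^2-a^2)b^2\bigr)\ge 0$ for all $a,b\in A$. So $L$ is nonnegative on the Archimedean quadratic module generated by $Q\cup\{v(a)^2-a^2:a\in A\}$. Existence and uniqueness of $\mu$ on $K$ are then quoted from \cite{DA2}, Theorem 1.4.

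\textbf{Your route.} You apply the full representation theorem \cite{BCR}, Chapter 4, Theorem 2.5 to the monoid $(A,\cdot)$ with the identity involution. You then transfer the measure from the $v$-bounded semicharacters to $K$ by Stone--Weierstrass. The function $\rho\mapsto\rho(a+b)-\rho(a)-\rho(b)$ is continuous and $\mu$-orthogonal to the dense span of the evaluations. It therefore vanishes on the whole support, not just almost everywhere, so the possibly uncountably many pairs $(a,b)$ cause no trouble. Likewise, $\int\alpha(q)g^2\,d\mu\ge 0$ for all continuous $g$ forces $\alpha(q)\ge 0$ on the support.

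\textbf{Comparison.} Your route is self-contained apart from one classical theorem, and it makes uniqueness explicit. The paper's route is shorter. It also isolates the inequality $L\bigl((v(a)^2-a^2)b^2\bigr)\ge 0$, which links $v$-boundedness to positivity on an Archimedean quadratic module, the paper's main theme.

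\textbf{Two implicit points, both harmless.} BCR states positive definiteness with complex coefficients, but a real symmetric positive semidefinite matrix is also Hermitian positive semidefinite. Reading ``multiplicative'' as submultiplicative with $v(1)=1$ is what the paper intends, since the proposition is applied to $v_L$.
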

\begin{proof}The implication $(ii)\implies (i)$ is immediate. 

$(i)\implies (ii)$ Because the function $a\mapsto L(ab^2)$ is positive semidefinite and $v-$bounded we have by \cite{BCR},Chapter 4, Proposition 1.12 that 

$|L(ab^2)|\le v(a)L(b^2),a,b\in A$. This yields 
$$L(a^2b^2)=|L(a^2b^2)|\le v(a^2)L(b^2)\le (v(a))^2L(b^2),a,b\in A.$$
Consequently $x\mapsto L((v(a))^2-a^2)x)$ is positive semidefinite for all $a\in A$. 
Now the result is a consequence of \cite {DA2}, Theorem 1.4.
\end{proof}

 \begin{theorem}\label{IWOTA13}Let $A$ be an unital commutative $\mathbb R$-algebra

For a functional $ L:A\to \mathbb R$ with $L(1)=1$, the following conditions are equivalent:
\begin{enumerate}
\item[(i)]for every $a\in A$ there is a number $T_a>0$ such that
$$L((T_a-a)^ p(T_a+a)^ q)\ge 0,p,q\in \mathbb N_0;$$
\item[(ii)]there is an positive Radon measure $\mu$ on the compact  
$$ K=\{\alpha \in X(A):|\alpha(a)|\le T_a,a\in A\}$$
such that
$$L(a)=\int_{K}\alpha (a)d\mu(\alpha),a\in A.$$

\end{enumerate}
\end{theorem}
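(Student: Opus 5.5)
For (ii)$\implies$(i), take $\alpha\in K$. Then $|\alpha(a)|\le T_a$, so $\alpha(T_a-a)\ge 0$ and $\alpha(T_a+a)\ge 0$. Since $\alpha$ is multiplicative and $\alpha(1)=1$, we get $\alpha\big((T_a-a)^p(T_a+a)^q\big)=(T_a-\alpha(a))^p(T_a+\alpha(a))^q\ge 0$. Integrating against the positive measure $\mu$ gives (i).

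For (i)$\implies$(ii), the plan is to reduce everything to Proposition \ref{PreMR} with $Q=\emptyset$ and $v=v_L$. This needs three things: $L$ positive semidefinite, $v_L(a)\le T_a<\infty$ for every $a$, and an application of Proposition \ref{av}.

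Fix $a\in A$, write $T=T_a$, and define the linear functional $\varphi$ on $\mathbb R[x]$ by $\varphi(x^n)=L(a^n)$. Hypothesis (i) says exactly that $\varphi\big((T-x)^p(T+x)^q\big)\ge 0$ for all $p,q\in\mathbb N_0$. After the affine change of variable $x=T(2t-1)$, this becomes the Bernstein/Hausdorff condition on $[0,1]$: $\varphi$ is nonnegative on all $t^q(1-t)^p$. By Hausdorff's theorem (the Bernstein polynomials $t^q(1-t)^p$ generate the cone of polynomials positive on $[0,1]$), there is a positive measure $\nu_a$ on $[-T,T]$ with $L(a^n)=\int x^n\,d\nu_a(x)$. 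I expect this to be the main step; it is the only place where the full family of products is used, and I would quote Hausdorff's theorem in this Bernstein form rather than reprove it.

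From this representation, $L(a^2)=\int x^2\,d\nu_a\ge 0$, so $L$ is positive semidefinite. Moreover $\nu_a([-T,T])=L(1)=1$, hence $|L(a^n)|\le T^n$ for all $n\in\mathbb N_0$. Lemma \ref{arxiv} then gives $v_L(a)<\infty$ and in fact $v_L(a)\le T_a$.

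By Proposition \ref{av}, $v_L$ is submultiplicative and $L$ is $v_L$-bounded. Proposition \ref{PreMR}, applied with $v=v_L$ and $Q=\emptyset$ (the condition on $Q$ is vacuous), yields a positive Radon measure $\mu$ on $K'=\{\alpha\in X(A):|\alpha(a)|\le v_L(a),\ a\in A\}$ representing $L$. Since $v_L(a)\le T_a$, we have $K'\subseteq K$, and $K'$ is compact in $X(A)$. Extending $\mu$ by zero to $K$ gives the required representing measure, which proves (ii).
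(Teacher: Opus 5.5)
Your proof is correct. Its second half is exactly the paper's: you obtain positive semidefiniteness and $v_L(a)\le T_a$, then apply Proposition~\ref{av} and Proposition~\ref{PreMR} with $Q=\varnothing$, which lands the measure in $\{\alpha:|\alpha(a)|\le v_L(a)\}\subseteq K$.

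The difference is in how you extract the one-variable information from (i). The paper cites a lemma of Schm\"udgen (\cite{SCH3}, Lemma 3.1), once to get $L(a^2)\ge 0$ and then $n$ times to get $L\bigl((T_a^2-a^2)a^{2n}\bigr)\ge 0$. That is the inequality $L(a^{2n+2})\le T_a^2L(a^{2n})$, so $v_L(a)\le T_a$ is read directly off the definition of $v_L$. No measure on the line is built and Lemma~\ref{arxiv} is not needed. You instead pull $L$ back along $x\mapsto a$, invoke Hausdorff's theorem to get a full representing measure $\nu_a$ on $[-T_a,T_a]$, and then use Lemma~\ref{arxiv}.

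Your route rests on a classical, widely known theorem rather than a specialized lemma. It also shows that hypothesis (i) here implies hypothesis (i) of Theorem~\ref{IWOTA} with $b_a=T_a$, so Theorem~\ref{IWOTA13} is really Theorem~\ref{IWOTA} plus Hausdorff, with uniqueness of $\mu$ as a by-product. The paper's route is more economical: it produces exactly the ratio inequality that defines $v_L$.

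One small imprecision: in your parenthetical, Bernstein's theorem concerns polynomials strictly positive on $[0,1]$. Polynomials with interior zeros, such as $(t-\tfrac12)^2$, are not in that cone. Since you quote Hausdorff's theorem itself, this does not affect the argument.
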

\begin{proof}Because the implication $(ii)\implies (i)$ is obvious we only have to prove $(i)\implies (ii)$.
Let $a\in A$. From the inequalities 
 $$L((T_a-a)^ p(T_a+a)^ q)\ge 0,p,q\in \mathbb N_0$$
we get from \cite{SCH3}, Lemma 3.1
 $$L(a^ 2)\ge 0.$$
Let $n\in \mathbb N_0$. Using again \cite{SCH3}, Lemma 3.1 $n$ times we get
$$L((T_a^{2}-a^ 2)a^{2n})=T_a^{2}L(a^{2n})-L(a^{2(n+1)})\ge 0,n\in \mathbb N_0.$$

Now the result is a consequence of Proposition \ref{av} and Proposition  \ref{PreMR}
 (where we take $Q=\varnothing$).
\end{proof}
 \begin{example}\label{Boll}Let $A=\mathbb R[x_1,\dots,x_s]$ and
$$B^s:=\{(x_1,\dots,x_s)\in \mathbb R^s:x_1^ 2+\dots+x_s^ 2\le 1\}.$$
 The following conditions are equivalent:
\begin{enumerate}
\item[(i)]
$$L((1-x_1^2-\dots -x_s^2)^ \epsilon(1-x_1)^{m_1}(1+x_1)^{n_1}\dots (1-x_s)^{m_s}(1+x_s)^{n_s})\ge 0$$
where $\epsilon \in \{0,1\}$ and $m_1,n_1,\dots ,m_s,n_s\in \mathbb N_0$.

\item[(ii)]L is a moment functional on $B^s$.

\end{enumerate}
\begin{proof} $(i)\implies (ii).$ From \cite{SCH4}, Lemma 12.9 it results that for every $P\in A$ there is a number $M_P>0$ such that 
$$L((M_P-P)^j (M_P+P)^k)\ge 0,j,k\in \mathbb N_0.$$
Consequently by Theorem \ref {IWOTA13} the functional $L$ is positive semidefinite  and bounded by a submultiplicative function.

In the same way for every $P\in A$ we have
$$L((1-x_1^2-\dots -x_s^2)(M_P-P)^j (M_P+P)^k)\ge 0,j,k\in \mathbb N_0$$
which implies, as in the proof of Theorem \ref {IWOTA13}, that the functional
$$P\mapsto L((1-x_1^2-\dots -x_s^2)P)$$ is positive semidefinite.

Now, because $X(\mathbb R[x_1,\dots,x_s])$ can be identified with $\mathbb R^ s$ and because
$$B^s\subset [-1,1]^ s$$
Proposition \ref{av} and Proposition \ref{PreMR} (where we take $Q=\{1-x_1^2-\dots -x_s^2\}$) yield a positive Radon measure $\mu$ on $B^s$  such that
$$L(P)=\int_{B^s}P(x_1,\dots,x_s)d\mu(x_1,\dots,x_s),P\in A.$$
This finishes the proof because the implication $(ii)\implies (i)$ is immediate.

\end{proof}
\end{example}

\section{A positivstellensatz}

\begin{prop}\label{P}Let  $(T_a)_{a\in A} $ is a family of positive numbers and $\mathcal C$ the unital cone generated by the set $S$
$$\left\{(T_a-a)^j (T_a+a)^k,a\in A,j,k\in \mathbb N_0\right\}.$$
If $\alpha(c)>0$ for every $\alpha\in K$ where 

$$K=\{\alpha \in X(A):|\alpha(a)|\le T_a \text { for all }a\in A\}$$
 then $c\in C$.
\end{prop}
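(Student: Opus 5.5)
This is a strict Positivstellensatz for the cone $\mathcal C$, and the plan is to run the standard separation argument: assume $c\notin\mathcal C$, separate $c$ from $\mathcal C$ by a linear functional, and then use Theorem \ref{IWOTA13} to turn that functional into a measure on $K$, which contradicts $\alpha(c)>0$ on $K$.

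\textbf{Step 1: $\mathcal C$ is Archimedean.} For each $a\in A$ we have $T_a\pm a\in S\subseteq\mathcal C$ (take $(j,k)=(1,0)$ or $(0,1)$). Hence $\mathcal C$ is an Archimedean cone containing $1$, and $1$ is an algebraic interior point of $\mathcal C$.

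\textbf{Step 2: separation.} Suppose $c\notin\mathcal C$. By the Eidelheit--Kakutani separation theorem, which needs no topology on $A$ because $1$ is an algebraic interior point of $\mathcal C$, there is a nonzero linear functional $L:A\to\mathbb R$ with
$$L(\mathcal C)\ge 0 \quad\text{and}\quad L(c)\le 0.$$

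\textbf{Step 3: normalization.} We must have $L(1)>0$. If $L(1)=0$, then for each $a$ the relation $T_a\pm a\in\mathcal C$ gives $\pm L(a)\le T_aL(1)=0$, so $L=0$, a contradiction. Rescale so that $L(1)=1$.

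\textbf{Step 4: representing measure.} Since $L\ge 0$ on every product $(T_a-a)^j(T_a+a)^k$, the functional $L$ satisfies condition (i) of Theorem \ref{IWOTA13} with the given family $(T_a)$. That theorem then gives a positive Radon measure $\mu$ on the compact $K$ with
$$L(a)=\int_K\alpha(a)\,d\mu(\alpha),\qquad a\in A.$$

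\textbf{Step 5: contradiction.} The function $\alpha\mapsto\alpha(c)$ is continuous on $K$ and strictly positive. Because $\mu(K)=L(1)=1>0$, we get $L(c)=\int_K\alpha(c)\,d\mu>0$, contradicting $L(c)\le 0$. Hence $c\in\mathcal C$.

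\textbf{Where the difficulty lies.} The main obstacle is Step 2, specifically stating the separation theorem correctly for a convex cone in a possibly infinite-dimensional vector space without a topology. I would use the core/algebraic-interior version of Eidelheit--Kakutani. An equivalent route is to define the order unit seminorm $\|a\|=\inf\{\lambda:\lambda\pm a\in\mathcal C\}$, which is finite by the Archimedean property, and apply Hahn--Banach to it. The fact that $1$ lies in the core of $\mathcal C$ follows from Step 1: for any $a\in A$ and $0<t\le 1/T_a$ we have $1+ta=t(1/t-T_a)+t(T_a+a)\in\mathcal C$, using $1/t-T_a\ge 0$. A related subtlety is that $K$ may be empty. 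In that case $\mu=0$, which contradicts $L(1)=1$, so $c\notin\mathcal C$ is impossible and every $c$ lies in $\mathcal C$, consistent with the vacuous hypothesis.
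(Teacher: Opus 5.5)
Your proposal is correct and follows essentially the same route as the paper: $T_a\pm a\in\mathcal C$ makes $\mathcal C$ Archimedean, a separating functional with $L(1)=1$, $L(\mathcal C)\ge 0$, $L(c)\le 0$ exists, Theorem \ref{IWOTA13} gives a representing measure on $K$, and then $L(c)>0$ yields the contradiction. The only difference is that the paper cites Schm\"udgen's separation result for Archimedean cones (Proposition 12.14 / Theorem A.27), which already supplies the normalization $L(1)=1$. You instead derive this from Eidelheit--Kakutani and your Step 3.
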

\begin{proof}
Because $T_a+a$ and $T_a-a$  are elements of  $\mathcal C$ the unital cone $\mathcal C$ is Archimedean.
If we suppose that $c\notin C$, then, according to \cite{SCH4},Proposition 12.14 or
\cite{SCH33}, Appendix A,Theorem A.27, there is a linear functional $L:A\to \mathbb R$ positive on $\mathcal C$ such that $L(1)=1$ and $L(c)\le 0$.

Using the proof of  Theorem \ref{IWOTA13} we obtain a measure $\mu$ on $K$ such that
$$L(a)=\int_{K}\alpha (a)d\mu(\alpha),a\in A.$$
Now from $L(1)=1$ and $\alpha (c)>0,\alpha\in K$ we get $L(c)>0$. This is not possible and hence $c\in C$.
\end{proof}
\begin{remark}Note that in  \cite{SCH5} are examples of  positivstellens\"atze, for Archimedean cones, which are neither  semirings nor quadratic modules. 
\end{remark}
 \begin{example}\label{pBoll}With the notations from Example \ref{Boll} we obtain, as in Proposition \ref{P}, from Example \ref{Boll} the following positivestellensats:
 
 If $p\in A$ and $p(x)>0$ for all $x\in B^ s$ then $p$ is a nonnegative combination of terms 
 $$(1-x_1^2-\dots -x_s^2)^ \epsilon(1-x_1)^{m_1}(1+x_1)^{n_1}\dots (1-x_s)^{m_s}(1+x_s)^{n_s}$$
 with $\epsilon \in \{0,1\}$ and $m_1,n_1,\dots ,m_s,n_s\in \mathbb N_0$.
 \begin{remark}
For a different representation for polynomials positive on $B^ s$ see \cite{SCH5}, Example 26.
\end{remark}
\end{example}
\section{A moment problem on a product of intervals}
 \begin{theorem}\label{IWOTA}Let $A$ be an unital commutative $\mathbb R$-algebra and $(b_a)_{a\in A}$ a family of positive real numbers.

For a functional $ L:A\to \mathbb R$, with $L(1)=1$, the following conditions are equivalent:
\begin{enumerate}

\item[(i)]for every $a\in A$ there is  a positive Radon measure $\mu_a$ on $[-b_a,b_a]$ such that
$$L(a^ n)=\int_{[-b_a,b_a]}x^ nd\mu_a(x),n\in \mathbb N_0;$$
\item[(ii)]
$L$ is a positive semidefinite functional and we have
$$\sup_{n\in \mathbb N_0,L(a^{2n})\ne 0}\frac {L(a^{2n+2})}{L(a^{2n})}\le b^ 2_a$$
for all $a\in A$;

\item[(iii)]there is an unique positive Radon measure $\mu$ on the compact set 
$$ K=\{\alpha \in X(A):|\alpha(a)|\le b_a,a\in A\}$$
 such that

$$L(a)=\int_{K}\alpha (a)d\mu(\alpha),a\in A.$$

\end{enumerate}
\end{theorem}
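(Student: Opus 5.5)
I will prove the cycle $(i)\implies(ii)\implies(iii)\implies(i)$.

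\emph{Step $(i)\implies(ii)$.} First, positive semidefiniteness: for $a\in A$, applying (i) to $a^2$ gives $L(a^2)=\int x\,d\mu_{a^2}(x)$. Since $L(a^{2k})=\int x^k d\mu_{a^2}$ for all $k$, and also $L(a^{2k})=\int x^{2k}d\mu_a\ge 0$, I expect the cleanest route is simply to use $\mu_a$ directly: $L(a^2)=\int_{[-b_a,b_a]}x^2d\mu_a(x)\ge 0$. For the ratio bound, fix $a$ and $n$ with $L(a^{2n})\ne 0$. Then
$$b_a^2L(a^{2n})-L(a^{2n+2})=\int_{[-b_a,b_a]}(b_a^2-x^2)x^{2n}\,d\mu_a(x)\ge 0,$$
because $b_a^2-x^2\ge 0$ on the support. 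Since $L(a^{2n})>0$, dividing gives the ratio at most $b_a^2$.

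\emph{Step $(ii)\implies(iii)$.} By (ii), $v_L(a)\le b_a<\infty$ for every $a$. So Proposition \ref{av} applies: $v_L$ is submultiplicative and $L$ is $v_L$-bounded. I then apply Proposition \ref{PreMR} with $v=v_L$ and $Q=\varnothing$. This yields a unique positive Radon measure on $K'=\{\alpha:|\alpha(a)|\le v_L(a)\ \forall a\}$, which is contained in $K$ since $v_L(a)\le b_a$. Extending by zero gives a representing measure on $K$.

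The main obstacle is uniqueness on the possibly larger set $K$. Any representing measure $\nu$ on $K$ must be supported on $K'$. To see this, note that for $\alpha$ with $|\alpha(a)|>v_L(a)$ there is a neighbourhood $U$ where $|\alpha'(a)|>v_L(a)+\varepsilon$. If $\nu(U)>0$, then
$$L(a^{2n})=\int\alpha(a)^{2n}d\nu\ge (v_L(a)+\varepsilon)^{2n}\nu(U),$$
which contradicts $L(a^{2n})\le v_L(a)^{2n}$ from Lemma \ref{arxiv} for large $n$. Hence $\nu$ lives on $K'$, and uniqueness follows from Proposition \ref{PreMR}. Alternatively, uniqueness follows directly because the functions $\alpha\mapsto\alpha(a)$ form a subalgebra that separates points of the compact set $K$ and contains constants. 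By Stone–Weierstrass it is dense in $C(K)$, so $L$ determines $\mu$. I would use this simpler argument. (That $K$ is compact follows from Tychonoff, as a closed subset of $\prod[-b_a,b_a]$.)

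\emph{Step $(iii)\implies(i)$.} Take $\mu_a$ to be the image measure of $\mu$ under the continuous map $\alpha\mapsto\alpha(a)$, which takes values in $[-b_a,b_a]$ on $K$. Then $L(a^n)=\int_K\alpha(a)^n d\mu=\int x^n d\mu_a(x)$.
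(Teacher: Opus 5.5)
Your proof is correct. For $(i)\implies(ii)$ and $(ii)\implies(iii)$ you follow the paper: the integral estimate against $(b_a^2-x^2)x^{2n}$, then Proposition \ref{av} and Proposition \ref{PreMR} with $v=v_L$ and $Q=\varnothing$.

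Two points differ.

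First, you notice that Proposition \ref{PreMR} produces the measure on $\{\alpha:|\alpha(a)|\le v_L(a)\ \forall a\}$. That set is only contained in $K$, so uniqueness on $K$ needs its own argument. The paper passes over this. Your Stone--Weierstrass argument closes the gap: the functions $\alpha\mapsto\alpha(a)$ form a unital point-separating subalgebra of $C(K)$. In fact your argument shows that a representing measure on any compact subset of $X(A)$ is unique.

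Second, for $(iii)\implies(i)$ the paper observes that $n\mapsto L(a^n)$ is positive semidefinite on $\mathbb N_0$ with $|L(a^n)|\le b_a^n\mu(K)$. It then invokes the representation theorem \cite{BCR}, Chapter 4, Theorem 2.5. You instead take the image of $\mu$ under the continuous map $\alpha\mapsto\alpha(a)$ into $[-b_a,b_a]$. Your route is more elementary and gives $\mu_a$ explicitly. The paper's route stays within the semigroup framework used throughout and obtains uniqueness of $\mu_a$ from the cited theorem, although on a compact interval uniqueness also follows from Weierstrass approximation.
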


\begin{proof}

$(i)\implies (ii)$
we have  $L(a^ 2)=\int_{-b_a}^{b_a} x^2d\mu_a(x)\ge 0$ and for all $n\in \mathbb N_0$

$$L(a^{2n+2})=\int_{-b_a}^{b_a} x^ {2n+2}d\mu(x)=\int_{-b_a}^{b_a} x^ 2x^ {2n}d\mu(x)$$
$$\le b_a^{2}\int_{-b_a}^{b_a} x^{2n}d\mu(x)=b_a^{2}L(a^{2n}).$$
This finishes the proof of the implication $(i)\implies (ii)$;

$(ii)\implies (iii)$ 
we obtain our result from Proposition \ref{av} and Proposition \ref{PreMR}(where we take $Q=\varnothing$);

 $(iii)\implies (i)$ 
the function $\mathbb N_0\to \mathbb R$ defined by $n\mapsto L(a^n)$ is positive semidefinite and we have
$$|L(a^ n)|\le b_a^ n\mu (K).$$
Now the result is a consequence of \cite{BCR},Chapter 4,Theorem 2.5.
\end{proof}
As a consequence of the proof of Theorem \ref{IWOTA} we give 
a characterization of the moment functionals on compacts of characters which depend only of the given functional. 
 \begin{theorem}\label{IWOTAINTRINSIC}Let $A$ be an unital commutative $\mathbb R$-algebra

For a functional $ L:A\to \mathbb R$ with $L(1)=1$, the following conditions are equivalent:
\begin{enumerate}

\item[(i)]
for all $a\in A$ there is an unique positive Radon measure $\mu_a$ on a compact set 
$$ K_a\subset \mathbb R$$
 such that

$$L(a^ n)=\int_{K_a}x^ nd\mu(x),n\in \mathbb N_0;$$
\item[(ii)]
$L$ is a positive semidefinite functional and we have
$$v_L(a)<\infty$$
for all $a\in A$;

\item[(iii)]there is an unique positive Radon measure $\mu$ on a compact set 
$$ K\subseteq X(A)$$
 such that

$$L(a)=\int_{K}\alpha (a)d\mu(\alpha),a\in A.$$
Moreover 
$$ \text{supp}\, \mu \subseteq\{\alpha \in X(A):|\alpha(a)|\le v_L(a),a\in A\}.$$

\end{enumerate}
\end{theorem}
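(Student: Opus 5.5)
I will prove the cycle $(i)\implies(ii)\implies(iii)\implies(i)$, imitating the proof of Theorem \ref{IWOTA} with the family $(b_a)$ replaced by the intrinsic quantities $v_L(a)$.

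\emph{Step $(i)\implies(ii)$.} First, $L$ is positive semidefinite, since $L(a^2)=\int_{K_a}x^2\,d\mu_a(x)\ge 0$. Next, the compact set $K_a$ lies in some interval $[-b_a,b_a]$. Arguing exactly as in the proof of $(i)\implies(ii)$ in Theorem \ref{IWOTA}, we get $L(a^{2n+2})\le b_a^2L(a^{2n})$ for all $n\in\mathbb N_0$. Hence the supremum defining $v_L(a)$ is at most $b_a^2$, and $v_L(a)<\infty$. Alternatively, one can bound $|L(a^n)|\le b_a^n\mu_a(K_a)$ and combine Lemma \ref{arxiv} with \cite{BCR}, Chapter 4, Proposition 1.12.

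\emph{Step $(ii)\implies(iii)$.} By Proposition \ref{av}, $v_L$ is a submultiplicative function and $L$ is $v_L$-bounded. Then Proposition \ref{PreMR}, applied with $v=v_L$ and $Q=\varnothing$, gives a unique positive Radon measure $\mu$ on
$$K=\{\alpha\in X(A):|\alpha(a)|\le v_L(a),\ a\in A\}$$
that represents $L$. This $K$ is compact by Tychonoff's theorem, because it is a closed subset of $\prod_{a\in A}[-v_L(a),v_L(a)]$. The support statement is then immediate.

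Equivalently, this step is Theorem \ref{IWOTA} $(ii)\implies(iii)$ with $b_a=v_L(a)$. There is one subtlety: Theorem \ref{IWOTA} asks for positive numbers, while $v_L(a)$ may be $0$. For that reason I invoke Propositions \ref{av} and \ref{PreMR} directly rather than Theorem \ref{IWOTA}.

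\emph{Step $(iii)\implies(i)$.} Let $\mu$ be supported on a compact set $K\subseteq X(A)$ and fix $a\in A$. The evaluation map $\alpha\mapsto\alpha(a)$ is continuous, so $K_a=\{\alpha(a):\alpha\in K\}$ is a compact subset of $\mathbb R$. Take $\mu_a$ to be the image measure of $\mu$ under this map. Then $L(a^n)=\int_K\alpha(a)^n\,d\mu=\int_{K_a}x^n\,d\mu_a(x)$.

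Uniqueness of $\mu_a$ among measures on $K_a$ (indeed on any compact set) follows from determinacy of the moment problem on compact subsets of $\mathbb R$. By Weierstrass density, polynomials determine a Radon measure on a compact interval. This is also the content of \cite{BCR}, Chapter 4, Theorem 2.5, which the paper already uses.

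\emph{Uniqueness in $(iii)$.} This has to be read appropriately. Any two representing measures on compact sets are supported in the set $\{|\alpha(a)|\le v_L(a)\}$ by the support statement. Uniqueness then follows from the uniqueness part of Proposition \ref{PreMR}.

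\emph{Main obstacle.} I expect the hardest part to be the support claim for an arbitrary representing measure in $(iii)$. Suppose $\mu$ represents $L$ on some compact $K$ and $\alpha_0\in\operatorname{supp}\mu$ satisfies $|\alpha_0(a)|>v_L(a)$. Choose $\varepsilon>0$ so that the open set $U=\{\alpha:|\alpha(a)|>v_L(a)+\varepsilon\}$ has $\mu(U)>0$. Then
$$L(a^{2n})\ge (v_L(a)+\varepsilon)^{2n}\mu(U).$$
This contradicts the bound $L(a^{2n})\le v_L(a)^{2n}$ from Lemma \ref{arxiv} once $n$ is large. The argument proves both the support statement and, via Proposition \ref{PreMR}, the uniqueness.
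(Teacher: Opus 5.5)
Your proof is correct and follows the paper's route. The paper obtains this theorem from the proof of Theorem \ref{IWOTA}: a bounding interval for $K_a$ gives (i)$\implies$(ii), Propositions \ref{av} and \ref{PreMR} with $v=v_L$ and $Q=\varnothing$ give (ii)$\implies$(iii), and (iii)$\implies$(i) comes from \cite{BCR}, Chapter 4, Theorem 2.5, where you instead push $\mu$ forward directly under $\alpha\mapsto\alpha(a)$. Your extra argument that every compactly supported representing measure is carried by $\{\alpha:|\alpha(a)|\le v_L(a),\ a\in A\}$ usefully settles the sense of uniqueness in (iii), which the paper leaves implicit.
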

 \section{Algebras with a finite number of generators }
For the  next result we need the following proposition.
\begin{prop}Let $A$ be an unital commutative algebra and $L:A\to\mathbb R $.
 a positive semidefinite functional  $L:A\to\mathbb R $ with $L(1)=1$.

If $v_L(a)<\infty $ for all $a\in A$ then 
the function $v_L$ is a seminorm. That is 
$$v_L(a+b)\le v_L(a)+v_L(b)$$
for all $a,b\in A$ and
$$v_L(\alpha a)=|\alpha| v_L(a)$$
for  $a\in A$ and $\alpha \in \mathbb R$.
\end{prop}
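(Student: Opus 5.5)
Homogeneity is immediate from the definition, so the plan concentrates on the triangle inequality. I would combine Lemma \ref{arxiv} with the Cauchy--Schwarz inequality for the positive semidefinite functional $L$. By Lemma \ref{arxiv}, $v_L(a)=\inf\{c\ge 0: |L(a^n)|\le c^n,\ n\in\mathbb N_0\}$. Equivalently, $v_L(a)$ is the limit of $L(a^{2n})^{1/2n}$. To see this, note that the ratios $L(a^{2n+2})/L(a^{2n})$ are nondecreasing in $n$ by Cauchy--Schwarz, since $L(a^{2n})^2\le L(a^{2n-2})L(a^{2n+2})$. A sequence with nondecreasing ratios has its $n$-th roots converging to the supremum of the ratios. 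If $L(a^{2n})=0$ for some $n$, then the later even moments vanish as well, and that case is handled separately (then $v_L(a)$ is determined by finitely many ratios and the root limit is $0$; I would check consistency there, or simply work with the infimum characterization).

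For homogeneity, take $\lambda\ne 0$. Then $L((\lambda a)^{2n+2})/L((\lambda a)^{2n})=\lambda^2 L(a^{2n+2})/L(a^{2n})$, with the same index set where the denominator is nonzero, so $v_L(\lambda a)=|\lambda|v_L(a)$. The case $\lambda=0$ gives $v_L(0)=0$, because $L(0^{2n})=0$ for $n\ge1$ and the only admissible index is $n=0$, with ratio $L(0)/L(1)=0$.

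For subadditivity, it suffices to show $L((a+b)^{2n})^{1/2n}\le v_L(a)+v_L(b)+o(1)$, and then apply the infimum characterization through Lemma \ref{arxiv}. Expanding binomially,
$$L((a+b)^{2n})=\sum_{k=0}^{2n}\binom{2n}{k}L(a^kb^{2n-k}).$$
By Cauchy--Schwarz, $|L(a^kb^{2n-k})|\le L(a^{2k})^{1/2}L(b^{4n-2k})^{1/2}\le v_L(a)^k v_L(b)^{2n-k}$, using $L(a^{2m})\le v_L(a)^{2m}$ from the proof of Lemma \ref{arxiv}. Summing gives $L((a+b)^{2n})\le (v_L(a)+v_L(b))^{2n}$. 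Hence $|L((a+b)^m)|\le L((a+b)^{2m})^{1/2}\le (v_L(a)+v_L(b))^m$ for all $m$, and Lemma \ref{arxiv} yields $v_L(a+b)\le v_L(a)+v_L(b)$. This avoids any limit argument.

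The main subtlety is the Cauchy--Schwarz bound on the mixed terms. It needs $L(a^{2k})\le v_L(a)^{2k}$ and the analogous bound for $b$, which were established in the proof of Lemma \ref{arxiv}. One must also make sure the infimum characterization in Lemma \ref{arxiv} applies to $|L(c^n)|$ rather than only to $L(c^n)$. It does: the bound $|L(c^n)|\le c_0^n$ for all $n$ implies $v_L\le c_0$ by that lemma's second part.
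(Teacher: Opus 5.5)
Your proof is correct and is essentially the paper's argument: expand $(a+b)^n$ binomially, bound the mixed terms by $|L(a^kb^{n-k})|\le v_L(a)^kv_L(b)^{n-k}$, sum to $(v_L(a)+v_L(b))^n$, conclude with Lemma \ref{arxiv}, and read homogeneity off the definition. The only difference is that you get the mixed-term bound directly from Cauchy--Schwarz and $L(a^{2m})\le v_L(a)^{2m}$ rather than by citing Proposition \ref{av}, whose proof uses the same estimate; your detour through even powers and the limit formula for $L(a^{2n})^{1/2n}$ is not needed, since that bound already gives $|L((a+b)^n)|\le (v_L(a)+v_L(b))^n$ for every $n$.
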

\begin{proof}
We have for all $a,b\in A$ and $n\in\mathbb N_0$
\begin{equation*}\begin{split}|L((a+b)^ n)|&=|\sum_{k=0}^ n\binom nkL(a^ kb^ {n-k})|\\&\le \sum_{k=0}^ n\binom nk|L(a^ kb^ {n-k})|\\&
\overset{Proposition \,\ref{av}}{\le} \sum_{k=0}^ n\binom nkv_L(a)^ kv_L(b)^ {n-k}\\&=(v_L(a)+v_L(b))^ n.\end{split}\end{equation*}
Consequently Lemma\ref{arxiv} yields
$$v_L(a+b)\le v_L(a)+v_L(b).$$

This finishes the proof because the equality $v_L(\alpha a)=|\alpha| v_L(a)$ is immediate.
\end{proof}
\begin{lemma}\label{compact}
Let $A$ be an unital commutative algebra with generators $a_1,\dots,a_m$.
For a positive semidefinite functional  $L:A\to\mathbb R $ with $L(1)=1$ the following conditions are equivalent :
\begin{enumerate}

\item [(i)] 
$$v_L(a_1^ {2}+\dots+a_m^ {2})<\infty;$$
\item[(ii)] $$v_L(a_j)<\infty$$
for all $j\in\{1,\dots,m\};$
\item [(iii)]
$$v_L(a)<\infty,a.$$
for all $a\in A.$

\end{enumerate}
\end{lemma}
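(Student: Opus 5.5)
We will prove the implications $(iii)\implies(i)$, $(i)\implies(ii)$ and $(ii)\implies(iii)$. The first is trivial. For $(ii)\implies(iii)$ we use that $v_L$ is a submultiplicative seminorm whenever it is finite everywhere. That hypothesis is not yet available, so we argue directly with the moment bounds from Lemma \ref{arxiv}. We use the equivalent formulation: $v_L(a)<\infty$ if and only if $|L(a^n)|\le c^n$ for some $c$ and all $n$.

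\emph{Step $(ii)\implies(iii)$.} Let $c_j=v_L(a_j)$. First we bound mixed monomials. By the Cauchy--Schwarz inequality for the positive semidefinite form $(x,y)\mapsto L(xy)$, for monomials $a^{k}=a_1^{k_1}\cdots a_m^{k_m}$ we have
$$|L(a^k)|^2\le L(a_1^{2k_1})\,L\big((a_2^{k_2}\cdots a_m^{k_m})^2\big).$$
Iterating this bound through the generators introduces powers of $2$ in the exponents, but it is controlled: $L(x^{2n})\le v_L(x)^{2n}$ from the proof of Lemma \ref{arxiv}. By induction on $m$ this gives $|L(a^k)|\le c_1^{k_1}\cdots c_m^{k_m}$, which is the argument of Proposition \ref{av} with all exponents scaled. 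Now take an arbitrary $a\in A$ and write it as a polynomial $P(a_1,\dots,a_m)=\sum_\beta r_\beta a^\beta$. Expanding $a^n$ multinomially and applying the monomial bound gives
$$|L(a^n)|\le \Big(\sum_\beta |r_\beta|\,c^\beta\Big)^n.$$
This is the binomial computation from the seminorm proposition, and Lemma \ref{arxiv} then yields $v_L(a)<\infty$.

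\emph{Step $(i)\implies(ii)$.} Set $s=a_1^2+\dots+a_m^2$. The goal is $L(a_j^{2n})\le C^n$. We have $s^n-a_j^{2n}$ equal to a sum of terms each of which is a product of squares. Since $L$ is only positive semidefinite, these terms must be written as squares or combined with the positivity available. Every monomial $a_1^{2k_1}\cdots a_m^{2k_m}$ is a square, so $L$ is nonnegative on it. Hence
$$0\le L(a_j^{2n})\le L(s^n)\le v_L(s)^n.$$
Therefore $|L(a_j^{n})|\le \sqrt{L(a_j^{2n})}\le (\sqrt{v_L(s)})^{n}$, and Lemma \ref{arxiv} gives $v_L(a_j)<\infty$.

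The main obstacle is the monomial bound in $(ii)\implies(iii)$. One must check carefully that repeated Cauchy--Schwarz yields exactly $c_1^{k_1}\cdots c_m^{k_m}$ with no loss, which holds because each factor $L(a_j^{2^r k_j})\le c_j^{2^r k_j}$. Alternatively, one can induct on $m$: apply Proposition \ref{av}'s argument to the product of the two elements $a_1^{k_1}$ and $a_2^{k_2}\cdots a_m^{k_m}$, where $v_L$ of each is finite by the inductive hypothesis together with $v_L(x^k)\le v_L(x)^k$.
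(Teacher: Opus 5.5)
Your proof is correct and follows the paper. Step (i)$\Rightarrow$(ii) is the same estimate $L(a_j^n)^2\le L(a_j^{2n})\le L(s^n)\le v_L(s)^n$. For (ii)$\Rightarrow$(iii) the paper simply cites that $v_L$ is submultiplicative and a seminorm, and the proofs of those facts are exactly the Cauchy--Schwarz and binomial estimates you carry out directly on monomials.

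Your unpacking is worth keeping. Those two propositions are stated under the hypothesis that $v_L$ is finite on all of $A$, which is the conclusion here. The paper's citation is only justified once one notes that their proofs work pointwise, i.e.\ that $\{a: v_L(a)<\infty\}$ is a unital subalgebra.
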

\begin{proof}
$(i)\implies (ii)$ Because $L$ is positive semidefinite, we have 
$$(L(a_j^ {n}))^2\le L(a_j^ {2n})\le L((a_1^ {2}+\dots+a_m^ {2})^ {n})\le (v_L(a_1^ {2}+\dots+a_m^ {2}))^ n$$
for all $j\in\{1,\dots,m\}$ and $n\in \mathbb  N_0$. This yields, using Lemma \ref{arxiv},
$$v_L(a_j)\le \sqrt{v_L(a_1^ {2}+\dots+a_m^ {2})}$$
for all $j\in\{1,\dots,m\}$;

$(ii)\implies (iii)$ This is a consequence of the fact that $v_L$ is a submultiplicative function and a seminorm.

The implication $(iii)\implies (i)$ is trivial.
\end {proof}

Now we give an characterization of moment functional on compacts of characters of  an unital commutative algebra, with a finite number of generators
depending only of the functional.
\begin{theorem}\label{sf}
Let $A$ be an unital commutative algebra with generators $a_1,\dots,a_m$.
Let $L:A\to\mathbb R $ be a  positive semidefinite functional  on $A$ such that $L(1)=1$.
Then there exists a representing Radon measure $\mu$ for $L$ with compact support  if and only if 

$$v_L(a_1^ {2}+\dots+a_m^ {2})<\infty$$

Moreover, in this case 
$$\text{supp}\,\mu\subseteq\left\{\alpha\in X(A):|\alpha(a_1)|^ 2+\dots+|\alpha(a_m)|^ 2\le v_L(a_1^ {2}+\dots+a_m^ {2})\right\}.$$
\end{theorem}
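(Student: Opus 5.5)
The plan is to prove the two implications separately, getting the support estimate from the ``moreover'' part of Theorem \ref{IWOTAINTRINSIC}.

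\emph{Sufficiency.} Assume $v_L(a_1^2+\dots+a_m^2)<\infty$. By Lemma \ref{compact}, $v_L(a)<\infty$ for every $a\in A$, so condition (ii) of Theorem \ref{IWOTAINTRINSIC} holds. That theorem then supplies a positive Radon measure $\mu$ with compact support representing $L$, and
$$\text{supp}\,\mu\subseteq\{\alpha\in X(A):|\alpha(a)|\le v_L(a),a\in A\}.$$
Put $s=a_1^2+\dots+a_m^2$. Every $\alpha$ in this set satisfies $\alpha(s)\le|\alpha(s)|\le v_L(s)$. Since $\alpha$ is an algebra homomorphism, $\alpha(s)=|\alpha(a_1)|^2+\dots+|\alpha(a_m)|^2$, which is exactly the support inclusion claimed in the statement.

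\emph{Necessity.} Suppose $L(a)=\int_K\alpha(a)\,d\mu(\alpha)$ with $K$ compact. The map $\alpha\mapsto\alpha(s)$ is continuous on $X(A)$ in the topology of pointwise convergence. Hence it is bounded on $K$ by some $c\ge0$, and it is nonnegative there. Then
$$|L(s^n)|\le c^n\mu(K)=c^nL(1)=c^n,$$
and Lemma \ref{arxiv} gives $v_L(s)<\infty$. One could instead go through (iii)$\Rightarrow$(ii) of Theorem \ref{IWOTAINTRINSIC}, but the direct estimate is simpler.

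The main obstacle is relying on the ``moreover'' clause of Theorem \ref{IWOTAINTRINSIC}, whose proof is not displayed. If it needs to be justified, I would argue as in the proof of Theorem \ref{IWOTA}. Proposition \ref{av} shows that $v_L$ is submultiplicative and that $L$ is $v_L$-bounded. Proposition \ref{PreMR} with $v=v_L$ and $Q=\varnothing$ then yields a representing measure on $\{\alpha:|\alpha(a)|\le v_L(a)\ \forall a\}$.

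This set is compact. Because the $a_j$ generate $A$, each $\alpha$ is determined by $(\alpha(a_1),\dots,\alpha(a_m))$. The set embeds as a closed subset of the compact product $\prod_a[-v_L(a),v_L(a)]$, so Tychonoff's theorem applies. Compactness is needed in order to call this support compact.
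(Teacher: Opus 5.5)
Your proof is correct and is essentially the paper's argument. Lemma \ref{compact} upgrades $v_L(a_1^2+\dots+a_m^2)<\infty$ to $v_L(a)<\infty$ for all $a\in A$, and Theorem \ref{IWOTAINTRINSIC}, with its support clause specialized to $a=a_1^2+\dots+a_m^2$ (using $\alpha(a_1^2+\dots+a_m^2)=\sum_j\alpha(a_j)^2$), gives both the measure and the support bound. Your direct estimate $0\le L((a_1^2+\dots+a_m^2)^n)\le c^n$ for the converse is just the unpacked content of (iii)$\Rightarrow$(ii) of Theorem \ref{IWOTAINTRINSIC}, the same estimate as in (iii)$\Rightarrow$(i) of Theorem \ref{IWOTA}, so the two routes coincide.
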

\begin{proof}
The result is a consequence of Theorem \ref{IWOTAINTRINSIC} and  Lemma \ref{compact}.
\end{proof}
 \begin{example}\label{Boll2}Let $A=\mathbb R[x_1,\dots,x_s]$ and
 
$$B^s:=\{(x_1,\dots,x_s)\in \mathbb R^s:x_1^ 2+\dots+x_s^ 2\le 1\}.$$
For a functional $L:A\to\mathbb R$, with $L(1)=1$, the following conditions are equivalent:
\begin{enumerate}
\item[(i)]
$L$ is positive semidefinite and
$$v_L(x_1^ {2}+\dots+x_m^ {2})\le 1;$$

\item[(ii)]L is a moment functional on $B^s$.

\end{enumerate}
\begin{proof}

Because $X(\mathbb R[x_1,\dots,x_s])$ can be identified with $\mathbb R^ s$,
Theorem \ref{sf} yields a positive Radon measure $\mu$ on $B^s$  such that
$$L(P)=\int_{B^s}P(x_1,\dots,x_s)d\mu(x_1,\dots,x_s).$$

\end{proof}
\end{example}
\section{More on the function $v_L$}
In the next result we compare the  seminorm $v_L$ with the submultiplicative seminorm
 from \cite {MSTP}.
\begin{theorem}\label{IWOTA33}Let $A$ be an unital commutative $\mathbb R$-algebra and $a\in A$.

For a positive semidefinite functional $ L:A\to \mathbb R$ with $L(1)=1$, the following conditions are equivalent:
\begin{enumerate}
\item[(i)]
$$\sup_{n\in \mathbb N_0,L(a^{2n})\ne 0}\frac {L(a^{2n+2})}{L(a^{2n})}<\infty;$$
\item[(ii)]
$$\sup_{n\in \mathbb N_0,L(a^{2n})\ne 0}\sqrt[2n]{L(a^{2n})}<\infty.$$

Moreover if these inequalities are true   we have 
$$\sup_{n\in \mathbb N}\sqrt[2n]{L(a^{2n})}=\sqrt{\sup_{n\in \mathbb N_0,L(a^{2n})\ne 0}\frac {L(a^{2n+2})}{L(a^{2n})}}.$$
\end{enumerate}
\end{theorem}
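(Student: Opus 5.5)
Write $s_n=L(a^{2n})\ge 0$; these are nonnegative because $L$ is positive semidefinite. Put $R=\sup_{n\in\mathbb N_0,\,s_n\ne 0} s_{n+1}/s_n$ and $M=\sup_{n\ge 1,\,s_n\ne0}\sqrt[2n]{s_n}$. The plan is to prove two inequalities: $M\le\sqrt R$ when $R<\infty$, and $R\le M^2$ when $M<\infty$. Together they give the equivalence of (i) and (ii) and the stated equality. The case $n=0$ adds nothing, since $s_0=L(1)=1$; so I will take $n\ge1$ in the root supremum, and terms with $s_n=0$ do not affect it.

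\emph{(i)$\Rightarrow$(ii), and $M\le\sqrt R$.} This is already contained in the proof of Lemma \ref{arxiv}. If $s_n\neq 0$ for all $n$, telescoping gives $s_n\le R\,s_{n-1}\le\dots\le R^n s_0=R^n$, hence $\sqrt[2n]{s_n}\le\sqrt R$. In general I use the Cauchy--Schwarz inequality for the positive semidefinite form $(x,y)\mapsto L(xy)$: it gives $s_n^2=L(a^{n-1}a^{n+1})^2\le s_{n-1}s_{n+1}$. So once some $s_k=0$ with $k\ge1$, all later $s_n$ vanish, and the telescoping runs unobstructed along the nonzero terms. Therefore $M\le \sqrt R=v_L(a)$.

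\emph{(ii)$\Rightarrow$(i), and $R\le M^2$.} Here the sequence $(s_n)$ is log-convex: $s_n^2\le s_{n-1}s_{n+1}$ by Cauchy--Schwarz, as above. Hence on the initial block where $s_n>0$, the ratios $r_n=s_{n+1}/s_n$ are nondecreasing in $n$. Fix $n$ with $s_n>0$ and set $r=r_n$. Monotonicity gives $s_{n+k}\ge r^k s_n$ for all $k\ge0$, provided all these terms are positive. Taking $2(n+k)$-th roots gives $M\ge \sqrt[2(n+k)]{s_{n+k}}\ge \sqrt[2(n+k)]{r^k s_n}$, and letting $k\to\infty$ yields $M\ge\sqrt r$. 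Thus $R\le M^2<\infty$.

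The step I expect to be the main obstacle is the degenerate case where some $s_{n+1}=0$ after positive terms, which would break the monotonicity argument. But then $r_n=0$, and such terms contribute nothing to the supremum $R$. By the log-convexity remark, the zeros form a tail: once $s_k=0$ for some $k\ge1$, the whole remainder vanishes. So I would restrict the argument to the maximal initial block of positive terms, where monotonicity holds. As an alternative I could invoke Lemma \ref{arxiv} directly: since $|L(a^{n})|^2\le L(a^{2n})\le M^{2n}$, we get $|L(a^n)|\le M^n$, and Lemma \ref{arxiv} gives $v_L(a)\le M$. This route bypasses the monotonicity argument entirely. Combined with the first part, it gives equality.
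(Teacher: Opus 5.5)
Your argument is correct. For (i)$\Rightarrow$(ii) you follow the paper, which reuses the telescoping bound $L(a^{2n})\le v_L(a)^{2n}$ from the proof of Lemma~\ref{arxiv}. Your remark that $s_n^2\le s_{n-1}s_{n+1}$ makes zeros propagate forward is exactly what justifies stepping over vanishing terms, a point the paper leaves implicit.

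For (ii)$\Rightarrow$(i) the paper sets $u(a)=\sup_{n}\sqrt[2n]{L(a^{2n})}$ and quotes its submultiplicativity from \cite{MSTP}, Lemma 3.3. It deduces $L(a^n)\le u(a^n)\le u(a)^n$ and then applies Lemma~\ref{arxiv}, so it rests ultimately on \cite{BCR}, Chapter 4, Proposition 1.12. Your ``alternative'' is the same route, streamlined: $|L(a^n)|^2\le L(a^{2n})\le M^{2n}$ needs no submultiplicativity. It also gives directly the bound with absolute values that is the hypothesis of Lemma~\ref{arxiv}.

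Your main route is genuinely different. It bypasses Lemma~\ref{arxiv} and \cite{BCR} altogether and obtains $s_{n+1}\le M^2 s_n$ from log-convexity and the growth bound $s_m\le M^{2m}$ alone. As a by-product it shows that the ratios in (i) are nondecreasing, so the supremum there is actually a limit. The paper's route is shorter only because it recycles Lemma~\ref{arxiv}.

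One step of the main route needs a line more than you give it. Restricting to the maximal initial block of positive terms does not justify letting $k\to\infty$ when that block is finite. Suppose $s_0,\dots,s_N>0$ and $s_{N+1}=0$ with $N\ge1$; then for $n<N$ with $r_n>0$ the terms $s_{n+k}$ eventually vanish, and the limit step fails. This case cannot occur, however, since $s_N^2\le s_{N-1}s_{N+1}=0$, so zeros also propagate backwards. Hence either $s_n>0$ for all $n$, or $s_n=0$ for all $n\ge1$, in which case $R=0$ and there is nothing to prove.
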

\begin{proof}Let $\sup_{n\in \mathbb N_0,L(a^{2n})\ne 0}\frac {L(a^{2n+2})}{L(a^{2n})}<\infty$.

We have, as in Proposition \ref{arxiv}, for all $n\in\mathbb N_0$

$$L(a^ {2n} )\le (v_L(a))^ {2n} $$

Hence 
$$\sqrt[2n]{L(a^{2n})}\le v_L(a).$$
which proves the implication $(i)\implies (ii)$.

Now let $\sup_{n\in \mathbb N}\sqrt[2n]{L(a^{2n})}<\infty$. 

Because the function $a\to \sup_{n\in \mathbb N}\sqrt[2n]{L(a^{2n})}=u(a)$ is a submultiplicative function  (\cite{MSTP},Lemma 3.3), and we have

$$L(a^ n)\le u(a^n)\le u(a)^ n,n\in \mathbb N_0$$

it results from Proposition \ref{arxiv} that 
$$\sup_{n\in \mathbb N_0,L(a^{2n})\ne 0}\frac {L(a^{2n+2})}{L(a^{2n})}<\infty$$
and 
$$v_L(a)\le u(a)$$
which finishes the proof.
\end{proof}

\textbf{Acknowledgments.}

The author thanks Professor Konrad Schm\"udgen for providing information on the reference \cite{SCH5}  where are established  Positivstellens\"atze for  Archimedean cones, which are in general neither  semirings nor  quadratic modules.

\textbf{Data availability declaration}

I do not analyze or generate any datasets because my work proceeds within a theoretical and mathematical approach. One can obtain the relevant materials from the references below.

\textbf{Ethics declaration}

The author declares no competing interests.
\bibliographystyle{amsplain}
\bibliography{references}

\end{document}